\newcommand{\IR}{\mathbb R}
\newcommand{\IN}{\mathbb N}
\newcommand{\II}{\mathbb I}
\newcommand{\w}{\omega}
\newcommand{\cbox}{\operatornamewithlimits{\boxdot}}
\newcommand{\U}{\mathcal U}
\newcommand{\id}{\mathrm{id}}
\newcommand{\ulim}{\mathrm u\mbox{-}\kern-2pt\varinjlim}
\newcommand{\tlim}{\mathrm t\mbox{-}\kern-2pt\varinjlim}
\newcommand{\lclim}{\mathrm{lc}\mbox{-}\kern-2pt\varinjlim}
\newcommand{\llim}{\mathrm{l}\mbox{-}\kern-2pt\varinjlim}
\newcommand{\glim}{\mathrm{g}\mbox{-}\kern-2pt\varinjlim}
\newcommand{\LL}{{\mathsf L}}
\newcommand{\RR}{{\mathsf R}}
\newcommand{\LR}{\mathsf{LR}}
\newcommand{\RL}{\mathsf{RL}}
\newcommand{\lz}{lz}
\newtheorem{theorem}{Theorem}[section]
\newtheorem{lemma}[theorem]{Lemma}
\newtheorem{problem}[theorem]{Problem}
\newtheorem{corollary}[theorem]{Corollary}
\newtheorem{proposition}[theorem]{Proposition}
\newtheorem{claim}[theorem]{Claim}
\theoremstyle{definition}
\newtheorem{definition}[theorem]{Definition}
\newtheorem{remark}[theorem]{Remark}
\title[On topological groups homeomorphic to LF-spaces]{Detecting topological groups which are\\ (locally) homeomorphic to LF-spaces}
\subjclass[2010]{57N20; 46A13; 22A05}
\keywords{LF-space, topological group, uniform space, direct limit}
\thanks{The first and third authors were partly supported by Slovenian Research Agency grant P1-0292-0101, J1-4144-0101, and J1-9643-0101. The last author was supported by JSPS Grant-in-Aid for Scientific Research (No.22540081).}
\author{T.~Banakh}
\address[T.Banakh]{Institute for Mathematics, Physics and Mechanics,
University of Ljubljana (Slovenia), Jan Kochanowski University in Kielce (Poland) and Ivan Franko national University of Lviv (Ukraine)}
\email{tbanakh@yahoo.com, T.O.Banakh@gmail.com}
\author{K.~Mine}
\address[K.Mine]{Institute of Mathematics,
University of Tsukuba, Tsukuba, 305-8571, Japan}
\email{pen@math.tsukuba.ac.jp}
\author{D.~Repov\v s}
\address[D.Repov\v s]{Faculty of Education, and Faculty of Mathematics and Physics,
University of Ljubljana,
P. O. Box 2964,
Ljubljana, Slovenia 1001}
\email{dusan.repovs@guest.arnes.si}
\author{K.~Sakai}
\address[K.Sakai]{Institute of Mathematics,
University of Tsukuba, Tsukuba, 305-8571, Japan}
\email{sakaiktr@sakura.cc.tsukuba.ac.jp}
\author{T.~Yagasaki}
\address[T.Yagasaki]{Division of Mathematics,
Graduate School of Science and Technology,
Kyoto Institute of Technology, Kyoto, 606-8585, Japan}
\email{yagasaki@kit.ac.jp}
\begin{document}
\begin{abstract} We prove that a topological group $G$ is (locally) homeomorphic to an LF-space if $G=\bigcup_{n\in\w}G_n$ for some increasing sequence of subgroups $(G_n)_{n\in\w}$ such that
\begin{enumerate}
\item for any  neighborhoods $U_n\subset G_n$, $n\in\w$, of the neutral element $e\in G_n\subset G$,  the set $\bigcup\limits_{n=1}^\infty U_0U_1\cdots U_n$ is a neighborhood of $e$ in $G$;
\item each group $G_n$ is (locally) homeomorphic to  a Hilbert space;
\item for every $n\in\IN$ the quotient map $G_n\to G_n/G_{n-1}$ is a locally trivial bundle;
\item for infinitely many numbers $n\in\IN$ each $Z$-point in the quotient space $G_{n}/G_{n-1}=\{xG_{n-1}:x\in G_n\}$ is a strong $Z$-point.
\end{enumerate}
\end{abstract}

 \maketitle

\section{Introduction}
The problem of recognizing the topological structure of topological groups traces its history back to the fifth problem of Hilbert which asks if Lie groups can be characterized as topological groups whose underlying topological spaces are manifolds. This problem was resolved by combined efforts of Gleason \cite{Gle}, Montgomery and Zippin
\cite{MZ}, Hoffman \cite{Hof}, and Iwasawa \cite{Iwa} who proved the following

\begin{theorem} A topological group $G$ is \textup{(}locally\textup{)} homeomorphic to a finite-dimensional Hilbert space if and only if $G$ is locally compact and \textup{(}locally\textup{)} contractible.
\end{theorem}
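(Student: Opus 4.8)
The plan is to prove both implications separately, the forward (``only if'') direction being elementary and the reverse (``if'') direction being the genuine content of the solution of Hilbert's fifth problem. For the ``only if'' direction, if $G$ is (locally) homeomorphic to $\IR^n$, then $G$ inherits local compactness and (local) contractibility directly from $\IR^n$, since both properties are topological invariants preserved under (local) homeomorphisms. So nothing is needed here beyond noting that $\IR^n$ is locally compact and contractible.

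For the ``if'' direction the strategy is to first endow $G$ with the structure of a Lie group, and only afterwards identify its underlying space. The first stage is to invoke the Gleason--Yamabe structure theorem: every locally compact group contains an open subgroup $G'$ such that for each neighborhood $U$ of the identity there is a compact normal subgroup $K\subset U$ with $G'/K$ a Lie group; thus $G'$ is, locally, an inverse limit of Lie groups. The decisive second stage is to extract an honest Lie group structure from local contractibility. The key point is that a locally compact, locally contractible group has \emph{no small subgroups}: arbitrarily small nontrivial compact subgroups would obstruct the contraction of a small contractible neighborhood onto the identity. Combined with the Montgomery--Zippin analysis of finite-dimensional locally connected groups (local contractibility yields local connectedness and controls the covering dimension), this forces $G$ to be a Lie group via the Gleason--Montgomery--Zippin theorem.

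Once $G$ is known to be a Lie group it is in particular a smooth manifold, hence locally homeomorphic to $\IR^n$, which settles the ``locally'' version. For the global version, assuming $G$ contractible, I would pass to the identity component (contractibility makes $G$ connected) and apply the Cartan--Malcev--Iwasawa theorem: a connected Lie group is homeomorphic to $K\times\IR^m$ for a maximal compact subgroup $K$. Contractibility of $G$ forces $K$ to be contractible, and a contractible compact Lie group is trivial, whence $G$ is homeomorphic to $\IR^m$ and $n=m$.

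The main obstacle is entirely concentrated in the second stage: deriving the no-small-subgroups property together with the dimension bound from the mere hypothesis of local contractibility, rather than from the stronger assumption of being locally Euclidean. This is precisely the deep structure theory developed by Gleason, Montgomery, Zippin, Hoffman and Iwasawa, so in a write-up this step would be cited rather than reproven, and the argument above would serve only to assemble their results into the stated equivalence.
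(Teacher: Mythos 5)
The paper does not prove this theorem at all: it is stated as a ``Classics'' background result, attributed to Gleason, Montgomery--Zippin, Hofmann and Iwasawa via citations, and is never used in proved form beyond the citation. So there is no internal proof to compare against, and your write-up should be judged as a reconstruction of the classical argument. As such, your assembly is essentially the standard one and is sound in outline: the ``only if'' direction is indeed immediate; the local ``if'' direction reduces to showing that a locally compact, locally contractible group is a Lie group; and the global version follows from the Iwasawa--Malcev decomposition $G\approx K\times\IR^m$ together with the fact that a contractible compact Lie group is trivial (since $K$ is a retract of $G$). The one place where you are glossing over the real content is the claim that ``arbitrarily small nontrivial compact subgroups would obstruct the contraction of a small contractible neighborhood onto the identity.'' That is a heuristic, not an argument: local contractibility does not obviously kill small subgroups (compare the Hilbert cube, which is compact, contractible and locally contractible), and the actual derivation of the no-small-subgroups property and of finite-dimensionality from local contractibility goes through the Gleason--Yamabe approximation by Lie quotients $G'/K$ and a nontrivial analysis of the fibers $K$. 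You do acknowledge that this step must be cited rather than reproven, which is consistent with how the paper itself treats the theorem, but you should be aware that the sentence as written would not survive as a proof step; it is precisely the deep part of the solution of Hilbert's fifth problem in this formulation.
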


We say that a topological space $X$ is {\em locally homeomorphic} to a space $E$ if each point $x\in X$ has an open neighborhood homeomorphic to an open subset of $E$. If in addition, $X$ is paracompact, then $X$ is called an {\em $E$-manifold}. A {\em Hilbert manifold} is a
paracompact space, locally homeomorphic to a Hilbert space.

Topological groups (locally) homeomorphic to separable Hilbert spaces were characterized by Dobrowolski and Toru\'nczyk \cite{DT}:

\begin{theorem}[Dobrowolski-Toru\'nczyk]\label{DT} A topological group $G$ is \textup{(}locally\textup{)} homeomorphic to a separable Hilbert space if and only if $G$ is a \textup{(}locally\textup{)} Polish absolute \textup{(}neighborhood\textup{)} retract.
\end{theorem}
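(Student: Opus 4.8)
The plan is to prove both implications, the ``only if'' part being routine and the ``if'' part carrying all the weight. For ``only if'', note that a separable Hilbert space is a Polish absolute retract (a separable completely metrizable convex set) and every open subset of it is a Polish absolute neighborhood retract; since all of these are topological properties, a group that is (locally) homeomorphic to such a space inherits them.

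For ``if'', assume $G$ is a (locally) Polish absolute (neighborhood) retract and split the argument according to local compactness. If $G$ is locally compact then, being an ANR, it is locally contractible, so the Classics theorem above applies and shows that $G$ is (locally) homeomorphic to a Euclidean space $\IR^n$, which is a finite-dimensional separable Hilbert space; in the global case the contractibility built into ``absolute retract'' forces $G\cong\IR^n$.

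The substantial case is when $G$ is not locally compact, where the aim is to identify $G$ as a manifold modeled on the infinite-dimensional separable Hilbert space $\ell_2$. Here I would invoke Toru\'nczyk's topological characterization of $\ell_2$-manifolds: a Polish absolute neighborhood retract is an $\ell_2$-manifold precisely when it has the discrete approximation property, i.e. for every open cover $\U$ and every map $f\colon\bigoplus_{k\in\IN}I^k\to G$ of the disjoint union of cubes there is a $\U$-close map $g$ whose restrictions have images $g(I^k)$ forming a discrete family in $G$. Granting this, Toru\'nczyk's theorem makes $G$ an $\ell_2$-manifold; in the global case $G$ is moreover contractible (an absolute retract) and hence homotopy equivalent to $\ell_2$, so the topological classification of $\ell_2$-manifolds upgrades this to $G\cong\ell_2$.

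The main obstacle, and the only place where the group structure is essential, is the verification of the discrete approximation property. The decisive observation is that for a non-locally-compact topological group no neighborhood of the identity is precompact, so (fixing a left-invariant metric and using that closed subsets of the Polish group $G$ are complete) every neighborhood of $e$ is non-totally-bounded and therefore contains an infinite $\e$-separated set; by homogeneity the same holds around every point of $G$. This is exactly the room needed: within each member of $\U$ one can fan the countably many cubes out onto such separated positions, so that moves which are small relative to $\U$ nevertheless yield images that do not accumulate. Organizing these local perturbations coherently over the whole cover, so that the global family $\{g(I^k)\}$ becomes discrete while $g$ stays $\U$-close to $f$, is the delicate technical core; once it is done, the characterization and classification theorems quoted above complete the proof.
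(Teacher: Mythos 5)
The paper does not actually prove this statement: Theorem~\ref{DT} is imported from the cited article of Dobrowolski and Toru\'nczyk \cite{DT}, so there is no in-paper argument to measure your proposal against. That said, your road map coincides with the strategy of the original proof: the ``only if'' direction is indeed routine; the locally compact case reduces to the Gleason--Montgomery--Zippin--Hofmann--Iwasawa theorem (an A(N)R is (locally) contractible, and in the global case contractibility of the resulting Lie group forces $G\cong\IR^n$); and the non-locally-compact case is handled by Toru\'nczyk's characterization of $l_2$-manifolds, with the group structure entering through homogeneity and the absence of precompact neighborhoods of the identity.

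The genuine gap is exactly the step you defer: verifying the discrete approximation property is the entire mathematical content of \cite{DT}, and ``fanning the cubes out onto $\e$-separated positions'' does not yet do it. Two concrete obstacles. First, the sets $g(I^k)$ are continua whose diameters are controlled only by the cover $\U$, not by $\e$; even if the cubes are sent to pairwise separated locations, infinitely many of the images can still meet a fixed member of $\U$ and accumulate there, so discreteness of the family $\{g(I^k)\}_{k}$ requires an inductive refinement of covers combined with left translations, with the convergence of the resulting limit map guaranteed by completeness. Second, your justification that every neighborhood of $e$ fails to be totally bounded quietly conflates two metrics: the left-invariant (Birkhoff--Kakutani) metric on a Polish group need not be complete, so ``closed subsets of $G$ are complete'' is not available for it; one must instead argue that a Polish group with a precompact identity neighborhood embeds as a closed subgroup of its locally compact Weil completion and is therefore itself locally compact. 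Neither point is fatal to the strategy, but as written the proposal is an accurate outline of the known proof rather than a proof.
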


In this theorem a Hilbert space can be finite or infinite dimensional. A topological space is called {\em locally Polish} if each point has a Polish (i.e., separable completely metrizable) neighborhood.

Topological groups which are (locally) homeomorphic to non-separable Hilbert spaces were characterized by Banakh and Zarichnyi \cite{BZar}:

\begin{theorem}[Banakh-Zarichnyi] A topological group $G$ is \textup{(}locally\textup{)} homeomorphic to an infinite-dimensional Hilbert space if and only if $G$ is a completely metrizable absolute (neighborhood) retract  which satisfies LFAP.
\end{theorem}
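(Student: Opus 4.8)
The statement is an equivalence, and I would treat its two directions separately, the forward (necessity) direction being routine and the converse (sufficiency) carrying the real weight.

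\medskip
\emph{Necessity.} An infinite-dimensional Hilbert space $H=\ell_2(\kappa)$ is a completely-metrizable AR, and every open subset of $H$ is a completely-metrizable ANR. If $G$ is (locally) homeomorphic to $H$, then these properties pass to $G$ (for the global case directly, for the local case together with the homogeneity supplied by the group translations). The point requiring a word is LFAP: for $H$ it follows from the linear structure, since coordinate shifts in $\ell_2(\kappa)$ allow one to move any locally finite family of finite-dimensional images off itself into a discrete position while staying arbitrarily close to the identity. As LFAP is a topological invariant, it transfers from $H$ to $G$.

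\medskip
\emph{Sufficiency.} Here is the plan. First I would exploit that $G$ is a topological group: the left translations $x\mapsto gx$ are homeomorphisms, so $G$ is topologically homogeneous and its local model is constant over all of $G$; since $G$ is metrizable I set $\kappa=w(G)$ (its weight, equivalently its density character). Next I would invoke a Toru\'nczyk-type recognition theorem in the non-separable setting: a completely-metrizable ANR of weight $\kappa$ satisfying LFAP is an $\ell_2(\kappa)$-manifold, the property LFAP encoding both infinite-dimensionality and the strong negligibility needed to run the absorption machinery. Homogeneity from the previous step guarantees a single model $\ell_2(\kappa)$ rather than a mixture of models of different weights, and this already settles the ``locally homeomorphic'' assertion, a Hilbert manifold being by definition locally homeomorphic to its model. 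Finally, for the global assertion $G$ is assumed to be an AR, hence contractible; and a contractible $\ell_2(\kappa)$-manifold is homeomorphic to $\ell_2(\kappa)$ by the classification of Hilbert manifolds, in which homotopy type is a complete topological invariant. This yields $G\cong\ell_2(\kappa)$.

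\medskip
The main obstacle is the non-separable recognition step. For weight $\aleph_0$ one has Toru\'nczyk's discrete approximation property, but for weight exceeding $\aleph_0$ families indexed by countable discrete sets no longer suffice, and LFAP---phrased in terms of arbitrarily large locally finite families---is the correct replacement. Establishing that a completely-metrizable ANR with LFAP is genuinely an $\ell_2(\kappa)$-manifold requires a locally finite upgrade of the standard tools ($Z$-set unknotting, skeletoids, negligibility of $Z$-sets), with the principal difficulty being the simultaneous control of locally finite families of unbounded cardinality. It is precisely the group's homogeneity that keeps this argument uniform across $G$ and pins down a single model space.
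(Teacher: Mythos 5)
This theorem is not proved in the paper at all: it is quoted as a known result of Banakh and Zarichnyi \cite{BZar}, so there is no internal proof to compare your attempt against. Judged on its own terms, your sketch has a genuine gap at exactly the point you flag as ``the main obstacle.'' You reduce sufficiency to a purported recognition theorem of the form ``a completely-metrizable ANR of weight $\kappa$ satisfying LFAP is an $\ell_2(\kappa)$-manifold.'' No such theorem is available, and if it were, the group hypothesis would be essentially decorative. Toru\'nczyk's actual characterization (Theorem~\ref{torunczyk} in the paper) requires the much stronger condition that every map $f\colon C\to X$ from a completely-metrizable space of density $\le\kappa$ be uniformly approximable by closed topological embeddings; LFAP by itself is a very weak separation-type property and does not encode $Z$-set unknotting, negligibility, or the absorption machinery for general ANRs. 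The entire content of the Dobrowolski--Toru\'nczyk and Banakh--Zarichnyi theorems is that the \emph{group multiplication} --- not mere topological homogeneity via translations --- lets one bootstrap: given a map $f\colon C\to G$ and the maps $f_n\colon G\to G$ witnessing LFAP, one combines them through products such as $x\mapsto f(x)\cdot g(x)$ to push $f$ into locally finite, and ultimately closed embedded, position. Your proposal assigns the group structure only the role of ``pinning down a single model,'' which misplaces where it actually does the work.

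A secondary, smaller issue: in the necessity direction for the local case you say LFAP ``transfers from $H$ to $G$'' as a topological invariant, but $G$ is only \emph{locally} homeomorphic to $H$ there, so nothing transfers directly; one needs that every $\ell_2(\kappa)$-manifold satisfies LFAP, which again follows from the discrete approximation property of Hilbert manifolds rather than from invariance. The final step of your global argument (an AR, hence contractible, $\ell_2(\kappa)$-manifold is homeomorphic to $\ell_2(\kappa)$ by the classification of Hilbert manifolds) is correct and is the standard conclusion.
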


We say that a topological space $X$ satisfies LFAP (the {\em Locally Finite Approximation Property\/}) if for any open cover $\U$ of $X$ there is a sequence of maps $f_n:X\to X$, $n\in\w$, such that each map $f_n$ is $\U$-near to the identity and the family $\big(f_n(X)\big)_{n\in\w}$ is locally finite in $X$. Two maps $f,g:X\to Y$ are called {\em $\U$-near} for a cover $\U$ of $Y$ if for each point $x\in X$ the doubleton $\{f(x),g(x)\}$ lies in some set $U\in\U$.
\smallskip

In this paper we address the problem of recognizing topological groups which are (locally) homeomorphic to LF-spaces.

We recall that an {\em LF-space} is the direct limit $\lclim X_n$ of a tower
$$X_0\subset X_1\subset X_2\subset\cdots
$$of Fr\'echet (i.e., locally convex linear completely metrizable) spaces in the category of locally convex spaces. More precisely, $\lclim X_n$ is the union $X=\bigcup_{n\in\w}X_n$ endowed with the strongest topology that turns $X$ into a locally convex space and makes the identity inclusions $X_n\to X$, $n\in\w$, continuous.

The simplest non-trivial example of an LF-space is $\IR^\infty$, the direct limit of the tower
$$\IR^1\subset \IR^2\subset \IR^3\subset\cdots,$$
where each space $\IR^n$ is identified with the hyperplane $\IR^n\times\{0\}$ in $\IR^{n+1}$. The space $\IR^\infty$ can be identified with the direct sum $\bigoplus_{n\in\w}\IR$ of one-dimensional Banach  spaces in the category of locally convex spaces.

The topological classification of LF-spaces was obtained by Mankiewicz \cite{Man}  who proved that each LF-space is homeomorphic to the direct sum $\bigoplus_{n\in\w}l_2(\kappa_i)$ of Hilbert spaces for some sequence  of cardinals $(\kappa_i)_{i\in\w}$. Here $l_2(\kappa)$ denotes the Hilbert space with an orthonormal base of cardinality $\kappa$. A more precise version of Mankiewicz's classification says that the following spaces
\begin{itemize}
\item $l_2(\kappa)$ for some cardinal $\kappa\ge 0$,
\item $\IR^\infty$,
\item $l_2(\kappa)\times\IR^\infty$ for some $\kappa\ge\w$, and
\item $\bigoplus_{n\in\w}l_2(\kappa_i)$ for a strictly increasing sequence of infinite cardinals $(\kappa_i)_{i\in\w}$
\end{itemize}
are pairwise non-homeomorphic and represent all possible topological types of LF-spaces. In particular, each infinite-dimensional separable LF-space is homeomorphic to one of the following spaces: $l_2$, $\IR^\infty$ or $l_2\times\IR^\infty$. The topological characterizations of the LF-spaces $l_2$ and $\IR^\infty$ were given by Toru\'nczyk \cite{Tor81}, \cite{Tor85} and Sakai \cite{Sak84}, respectively. Other LF-spaces were recently characterized by Banakh and Repov\v s \cite{BR-LF}.

The description of the topology of the direct sum $\bigoplus_{n\in\w}X_n$ of locally convex spaces given in \cite[II.\S6.1]{Sh} implies that this topology  coincides with the topology of the small box-product $\cbox_{n\in\w}X_n$.
The construction of the small box-product $\cbox_{n\in\w}X_n$ of pointed topological spaces is purely topological and is defined as follows.

By a {\em pointed space} $X$ we understand a space with a distinguished point, which will be denoted by $*_X$.  Each group $G$ is a pointed space whose distinguished point $*_G$ is the neutral element of $G$. For a subgroup $H\subset G$ the quotient space $G/H=\{xH:x\in G\}$ is a pointed space with the distinguished point $*_{G/H}=H\in G/H$.

The {\em small box-product} of a sequence $(X_n)_{n\in\w}$ of pointed topological spaces is the subspace
$$\cbox_{n\in\w}X_n=\{(x_n)_{n\in\w}\in\square_{n\in\w}X_n:\exists m\in\w\;\forall n\ge m\;\;x_n= *_{X_n}\}$$of the box-product $\square_{n\in\w}X_n$. The latter space is the product $\prod_{n\in\w}X_n$ endowed with the topology generated by the products $\prod_{n\in\w}U_n$ of open subsets $U_n\subset X_n$, $n\in\w$.

Now let us return to the problem of recognizing topological groups that are (locally) homeomorphic to LF-spaces.
 We say that a topological group $G$ {\em carries the strong topology with respect to a tower of subgroups} $$G_0\subset G_1\subset G_2\subset\cdots$$if $G=\bigcup_{n\in\w}G_n$ and for any neighborhoods $U_n\subset G_n$, $n\in\w$, of the neutral element $e$ the group product $\bigcup_{n=1}^\infty U_0U_1\cdots U_n$ is a neighborhood of $e$ in the group $G$. The nature of this property will be discussed in Section~\ref{s:stg}.

A closed subgroup $H$ of a topological group $G$ is defined to be ({\em locally}) {\em topologically complemented} in $G$ if the quotient map $\pi:G\to G/H$, $\pi:x\mapsto xH$, is a (locally) trivial bundle. This happens if and only if $\pi$ has a section $s:G/H\to G$, which is continuous on (some non-empty open subset of) the quotient space $G/H$. It follows that for a (locally) topologically complemented subgroup $H$ of $G$ the group $G$ is (locally) homeomorphic to the product $H\times (G/H)$.
In Proposition~\ref{p:ltc-tc} we shall prove that a locally topologically complemented subgroup $H$ of an ANR-group $G$ is topologically complemented in $G$ if the quotient space $G/H$ is contractible or both groups $G$ and $H$ are contractible. By an {\em ANR-group} we understand a topological group whose underlying topological space is an ANR. Therefore, each ANR-group is metrizable.

A tower of groups $(G_n)_{n\in\w}$ is called ({\em locally}) {\em topologically complemented} if each group $G_n$ is (locally) topologically complemented in $G_{n+1}$.

The topology of a topological group carrying the strong topology with respect to a (locally) topologically complemented tower of subgroups is closely related to small box-products:

\begin{theorem}\label{t:gbox} A topological group $G$ carrying the strong topology with respect to a (locally) topologically complemented tower of subgroups $(G_n)_{n\in\w}$ is (locally) homeomorphic to the small box-product $G_0\times \cbox\limits_{n\in\w}G_{n+1}/G_n$.
\end{theorem}

Theorem~\ref{t:gbox} will be proved in Section~\ref{s:gbox}. This theorem motivates the problem of studying the topological structure of small box-products and recognizing small box-products that are (locally) homeomorphic to LF-spaces. A corresponding criterion was proved in \cite{BR-LF}. It involves the notion of a strong $Z$-point.

Let us recall that a closed subset $A$ of a topological space $X$ is called a ({\em strong}) {\em $Z$-set in} $X$ if for any open cover $\U$ of $X$ there is a continuous map $f:X\to X$ such that $f$ is $\U$-near to the identity $\id_X:X\to X$ and (the closure $\overline{f(X)}$ of) the set $f(X)$ does not intersect $A$.
It is clear that each strong $Z$-set is a $Z$-set. The converse is not true, see \cite{BBMW}. However each $Z$-set in a Hilbert manifold is a strong $Z$-set, see \cite{BM}, \cite{Tor85}. A point $x$ of a topological space $X$ will be called a ({\em strong}) {\em $Z$-point} if the singleton $\{x\}$ is a (strong) $Z$-set in $X$.

A pointed topological space $X$ is called {\em $\lz$-pointed} if the distinguished point $*_X$ is not isolated in $X$ and either $X$ is locally compact or $*_X$ is a strong $Z$-point in $X$.

We shall use the following criterion proved in \cite{BR-LF}:

\begin{theorem}[Banakh-Repov\v s]\label{t:sb} The small box-product $\cbox_{n\in\w}X_n$ of pointed topological spaces $X_n$, $n\in\w$, is homeomorphic to (an open subspace of) an LF-space if for every $n\in\w$ the finite product $\prod_{i\le n}X_i$ is homeomorphic to (an open subset of) a Hilbert space and the space $X_n$ is $\lz$-pointed for infinitely many numbers $n\in\w$.
\end{theorem}

 We shall say that a topological space has {\em the $Z$-point property} if each $Z$-point in $X$ is a strong $Z$-point. Since each $Z$-set in a Hilbert manifold is a strong $Z$-set, Theorem~\ref{DT} implies that each Polish ANR-group has the $Z$-point property.

In Proposition~\ref{p:lz} we shall prove that for a locally topologically complemented subgroup $H$ of an ANR-group $G$ the quotient space $G/H$ is $\lz$-pointed if and only if it is not discrete and has the $Z$-point property. Combining this fact with Theorems~\ref{t:gbox}, \ref{t:sb} and Proposition~\ref{p:ltc-tc}, we obtain the following criterion, which is the main result of this paper.
This criterion has been applied in \cite{BMSY} and \cite{BY} for recognizing the topology of some homeomorphism and diffeomorphism groups.

\begin{theorem}\label{t:gLF} A topological group $G$ carrying the strong topology with respect to a tower of subgroups $(G_n)_{n\in\w}$ is
\begin{enumerate}
\item homeomorphic to (an open subset of) an LF-space if for every $n\in\w$ the group $G_n$ is homeomorphic (to an open subset) of a Hilbert space, $G_n$ is topologically complemented in $G_{n+1}$, and for infinitely many numbers $n\in\w$ the quotient space $G_{n+1}/G_n$ is not discrete and has the $Z$-point property;
\item (locally) homeomorphic to an LF-space if for every $n\in\IN$ the  group $G_n$ is (locally) homeomorphic to a Hilbert space, $G_n$ is locally topologically complemented in $G_{n+1}$, and for infinitely many numbers $n\in\w$ the quotient space $G_{n+1}/G_n$ is not discrete and has the $Z$-point property.
\end{enumerate}
\end{theorem}

Because of the lack of an Open Embedding Theorem for LF-manifolds, we distinguish between LF-manifolds and open subspaces of LF-spaces. This is why we have two separate statements (1) and (2) in Theorem~\ref{t:gLF}. It should be mentioned that  the topological structure of open subspaces of LF-spaces is quite well understood, which cannot be said about LF-manifolds, see \cite{MS}, \cite{MS2}.

In light of Theorem~\ref{t:gLF} it is natural to ask if the quotient spaces $G_{n+1}/G_n$ always have the $Z$-point property.

\begin{problem} Let $G$ be a Polish ANR-group and $H$ be a (locally) topologically complemented subgroup in $G$. Does the quotient space $G/H$ have the $Z$-point property?
\end{problem}

The answer to this problem is (trivially) affirmative if $G/H$ is a Hilbert manifold. This is why Theorem~\ref{t:gLF} implies the following criterion for recognition of topological groups which are locally homeomorphic to LF-spaces.

\begin{theorem}\label{t:gGH} A topological group $G$ carrying the strong topology with respect to a tower of subgroups
$$\{*_G\}=G_0\subset G_1\subset \cdots$$
\begin{enumerate}
\item is homeomorphic to (an open subset of) an LF-space if for every $n\in\w$ the group $G_{n}$ is topologically complemented in $G_{n+1}$ and the quotient space $G_{n+1}/G_n$ is homeomorphic to (an open subset of) a Hilbert space;
\item is (locally) homeomorphic to an LF-space if for every $n\in\IN$ the  group $G_n$ is locally topologically complemented in $G_{n+1}$ and the quotient space $G_{n+1}/G_n$ is (locally) homeomorphic to a Hilbert space.
\end{enumerate}
\end{theorem}

Considering Theorem~\ref{t:gGH}, we can ask another open

\begin{problem} Let $G$ be a Polish ANR-group and $H$ be a (locally) topologically complemented subgroup in $G$. Is $G/H$ a Hilbert manifold?
\end{problem}

Banakh and Repov\v s obtained in \cite[2.2]{BR10a} an affirmative answer to this problem under the condition that the subgroup $H$ is {\em balanced} in $G$. The latter means that for each neighborhood $U\subset G$ of the neutral element $*_G$ of $G$ there is a neighborhood $V\subset G$ of $*_G$ such that $VH\subset HU$.

\begin{theorem}[Banakh-Repov\v s] If $G$ is a Polish ANR-group and $H$ is a balanced closed ANR-subgroup in $G$, then the quotient space $G/H$ is a Hilbert manifold and hence has the $Z$-point property.
\end{theorem}

Combining this theorem with Theorems~\ref{DT}, \ref{t:gLF} and Proposition~\ref{p:ltc-tc}, we obtain the following criterion.

\begin{theorem}\label{main3} A topological group $G$ carrying the strong topology with respect to a (locally) topologically complemented tower of Polish ANR-groups $(G_n)_{n\in\w}$
is (locally) homeomorphic to an open subset of the LF-space $\IR^\infty$ or $l_2\times \IR^\infty$ if for infinitely many numbers $n\in\w$ the subgroup $G_n$ is balanced and not open in $G_{n+1}$.
\end{theorem}

Next, we formulate another condition on a subgroup $H$ of a topological group $G$ which implies that the quotient space $G/H$ has the $Z$-point property. Note that the quotient space $G/H$ is a $G$-space with the natural left action of the group $G$.

Let us recall that a {\em $G$-space} is a topological space $X$ endowed with a continuous action $\alpha:G\times X\to X$, $\alpha:(g,x)\mapsto gx$, of a topological group $G$. We say that the action of $G$ on $X$ is {\em locally bounded} at a point $x_0\in X$ if there is a neighborhood $U\subset G$ of the neutral element $*_G$ of $G$ such that for every neighborhood $V\subset X$ of $x_0$ there is a compact subset $K\subset X$ which meets each shift $xV$, $x\in U$. In the opposite case, the action is called {\em locally unbounded} at $x_0$. The action of $G$ on $X$ is {\em locally unbounded} if this action is locally unbounded at each point $x_0\in X$. If the action of $G$ is locally unbounded at some point $x_0\in X$, then $X$ is not locally compact at $x_0$.

In Proposition~\ref{p:Zlb} we shall show that for a closed subgroup $H$ of a locally path-connected topological group $G$, each point of the quotient space $G/H$ is a strong $Z$-point if the space $G/H$ is a separable ANR and the action of $G$ on $G/H$ is locally unbounded. Combining this fact with Theorems \ref{DT} and \ref{t:gLF}, we get the following criterion:

\begin{corollary} A topological group $G$ carrying the strong topology with respect to a (locally) topologically complemented tower of Polish ANR-groups $(G_n)_{n\in\w}$
is (locally) homeomorphic to an open subset of the LF-space $l_2\times \IR^\infty$ if for infinitely many numbers $n\in\w$ the action of the group $G_{n+1}$ on $G_{n+1}/G_n$ is locally unbounded.
\end{corollary}

\section{Uniform direct limits}

It turns out that the structure of topological groups $G$ carrying the strong topology with respect to a tower of subgroups $(G_n)_{n\in\w}$ can be described in terms of uniform direct limits of towers of uniform spaces.
Therefore, in this section we recall the necessary  information on this topics.
For basic information on uniform spaces we refer the reader to Chapter 8 of Engelking's monograph \cite{En}.

All topological spaces considered in this paper are completely regular and all maps are continuous. For a uniform space $X$ we denote its uniformity by $\U_X$. Since uniform spaces are completely regular, the intersection $\cap\U_X$ coincides with the diagonal of $X\times X$. A uniform space $X$ is called {\em metrizable} if its uniformity is generated by some metric. Elements of the uniformity $\U_X$ are called {\em entourages}. For an entourage $U\in\U_X$, a point $x\in X$ and a subset $A\subset X$ by $B(x,U)=\{y\in X:(x,y)\in U\}$ we denote the $U$-ball centered at $x$ and by $B(A,U)=\bigcup_{x\in A}B(x,U)$ the $U$-neighborhood of $A$. A subset $O(A)$ is called a {\em uniform neighborhood} of $A$ in $X$ if $B(A,U)\subset O(A)$ for some entourage $U\in\U_X$.

By a {\em tower} of uniform spaces we shall understand an increasing sequence $$X_0\subset X_1\subset X_2\subset\cdots
$$of uniform spaces (so the uniformity of each space $X_n$ coincides with the uniformity inherited from the uniform space $X_{n+1}$).

For a tower of uniform spaces $$X_0\subset X_1\subset X_2\subset\cdot$$its {\em uniform direct limit} $\ulim X_n$ is the union $X=\bigcup_{n\in\w}X_n$ endowed with the largest uniformity making the identity inclusions $X_n\to X$, $n\in\w$,  uniformly continuous. The topology and the uniformity of the uniform direct limit $\ulim X_n$ were described in \cite{BR10}.

If each space $X_n$ of the tower is locally compact then the topology of $\ulim X_n$ coincides with the topology of the topological direct limit $\tlim X_n$ of the tower $(X_n)_{n\in\w}$. The {\em topological direct limit} $\tlim X_n$ of a tower $(X_n)_{n\in\w}$ of topological spaces is the union $\bigcup_{n\in\w}X_n$ endowed with the largest topology turning the identity inclusions $X_n\to X$, $n\in\w$, into continuous maps.

If $(X_i)_{n\in\w}$ is a sequence of pointed uniform spaces, then each finite (box-)product
$$\cbox_{i\le n}X_i=\{(x_i)_{i\in \w}\in\cbox_{i\in\w}X_i:\forall i>n\;\;x_i=*_{X_i}\}\subset\cbox_{i\in\w}X_i$$carries the product uniformity.
Therefore, $(\cbox_{i\le n}X_i)_{n\in\w}$ turns into a tower of uniform spaces whose union $\bigcup_{n\in\w}\cbox_{i\le n}X_i$ coincides with the small box-product $\cbox_{n\in\w}X_i$. The following lemma was proved in \cite[5.5]{BR10}.

\begin{lemma} For a sequence $(X_i)_{i\in\w}$ of pointed uniform spaces the identity map$$\ulim \cbox_{i\le n}X_i\to\cbox_{i\in\w}X_i$$is a homeomorphism.
\end{lemma}

Next, we recall the definition of a (locally) complemented subset of a uniform space, introduced in \cite{BR-LF}.

\begin{definition} Let $Z$ be a pointed topological space. A subset $A$ of a uniform space $X$ is called {\em $Z$-complemented} in $X$ if there is a homeomorphism $\gamma:A\times Z\to X$ such that
\begin{enumerate}
\item for any neighborhood $V\subset Z$ of $*_Z$ there is an entourage $U\in\U_X$ such that $B(A,U)\subset \gamma(A\times V)$, and
\item for any entourage $U\in\U_X$ there is a neighborhood $V\subset Z$ of $*_Z$ such that  $\gamma(\{a\}\times V)\subset B(a,U)$ for each $a\in A$.
\end{enumerate}
A subset $A\subset X$ is called {\em locally $Z$-complemented} in $X$ if for some open neighborhood $V\subset Z$ of $*_Z$  the set $A$ is $V$-complemented in some open uniform neighborhood $U(A)$ of $A$ in $X$.
\end{definition}

The following important fact was proved in \cite{BR-LF}.

\begin{lemma}\label{l:udlbp} Let $(Z_n)_{n\in\w}$ be a sequence of pointed topological spaces and $(X_n)_{n\in\w}$ be a tower of uniform spaces.
\begin{enumerate}
\item If each set $X_n$ is $Z_n$-complemented in $X_{n+1}$, then the uniform direct limit $\ulim X_n$ is homeomorphic to the small box-product $X_0\times\cbox_{n\in\w}Z_n$.
\item If each set $X_n$ is locally $Z_n$-complemented in $X_{n+1}$, then each point $x_0\in X_0$ has an open neighborhood $O(x_0)\subset \ulim X_n$ which is homeomorphic to an open subset of $X_0\times \cbox_{n\in\w}Z_n$.
\end{enumerate}
\end{lemma}

\section{Strong topology on topological groups}\label{s:stg}

In this section we shall study the  structure of  topological groups $G$ that carry the strong topology with respect to a tower of subgroups $(G_n)_{n\in\w}$. It turns out that this happens if and only if $G$ is the direct limit of this tower in the categories of topological groups or uniform spaces.

Let us recall that each topological group $G$ carries four natural uniformities:
\begin{itemize}
\item the {\em left uniformity} $\U^{\LL}$ generated by the entourages $U^{\LL}=\{(x,y)\in G^2:x\in yU\}$,
\item the {\em right uniformity} $\U^{\RR}$ generated by the entourages $U^{\RR}=\{(x,y)\in G^2:x\in Uy\}$,
\item the {\em two-sided uniformity} $\U^{\LR}$ generated by the entourages $U^{\LR}=\{(x,y)\in G^2:x\in yU\cap Uy\}$, and
\item the {\em Roelcke uniformity} $\U^{\RL}$ generated by the entourages $U^{\RL}=\{(x,y)\in G^2:x\in UyU\}$,
\end{itemize}
where $U=U^{-1}$ runs over open symmetric neighborhoods of the neutral element $e$ of $G$.

The group $G$ endowed with one of the uniformities $\U^\LL$, $\U^\RR$, $\U^{LR}$, $\U^{\RL}$ is denoted by $G^\LL$, $G^\RR$, $G^\LR$, $G^{\RL}$, respectively. These four uniformities on $G$ coincide if and only if the group $G$ is a SIN-{\em group}, which means that $G$ has a neighborhood base at $*_G$ consisting of open sets $U\subset G$ that are {\em invariant} in the sense that $U^G=U$ where $U^G=\{gug^{-1}:g\in G,\; u\in U\}$.

Let $G$ be a topological group and $(G_n)_{n\in\w}$ be a tower of closed subgroups of $G$ such that $G=\bigcup_{n\in\w}G_n$. Endowing the subgroups  $G_n$, $n\in\w$, with one of four canonical uniformities, we obtain four uniform direct limits $\ulim G_n^\LL$, $\ulim G_n^\RR$, $\ulim G_n^\LR$, and  $\ulim G_n^{\RL}$ of the towers of uniform spaces $(G_n^\LL)_{n\in\w}$, $(G_n^\RR)_{n\in\w}$, and $(G_n^\LR)_{n\in\w}$, $(G_n^\RL)_{n\in\w}$, respectively.

Besides those direct limits, the group $G$ also carries the topology of the {\em group direct limit} $\glim G_n$ of the tower $(G_n)_{n\in\w}$. This is the strongest topology that turns $G=\bigcup_{n\in\w}G_n$ into a topological group and makes the identity maps $G_n\to G$, $n\in\w$, continuous.

For these direct limits we get the following diagram in which each arrow indicates that the corresponding identity map is continuous:
$$
\xymatrix{
&&\ulim G_n^\LL\ar[rd]&\\
\tlim G_n\ar[r]&\ulim G^{\LR}_n\ar[ru]\ar[rd]& &\ulim G^{\RL}_n\ar[r]&\glim G_n\ar[r]&G\\
&&\ulim G_n^\RR\ar[ru]
}
$$

The following proposition was proved in \cite{BRTohoku}.

\begin{proposition}\label{u=g} For a topological group $G$ and a tower of subgroups $(G_n)_{n\in\w}$ with $G=\bigcup_{n\in\w}G_n$ the following conditions are equivalent:
\begin{enumerate}
\item $G$ carries the strong topology with respect to the tower $(G_n)_{n\in\w}$;
\item the identity map $\ulim G_n^{\LL}\to G$ is a homeomorphism;
\item the identity map $\ulim G_n^{\RR}\to G$ is a homeomorphism.
\end{enumerate}
The equivalent conditions (1)--(3) imply:
\begin{enumerate}
\item[(4)] the identity map $\glim G_n\to G$ is a homeomorphism.
\end{enumerate}
\end{proposition}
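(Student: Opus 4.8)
\emph{My plan.} The inclusions $G_n^\LL\hookrightarrow G^\LL$ are uniformly continuous, so the identity $\ulim G_n^\LL\to G$ is continuous and the topology $\tau_L$ of $\ulim G_n^\LL$ refines the topology $\tau_G$ of $G$; thus condition (2) just asks for $\tau_L=\tau_G$. Since the left uniformity of each $G_n$ is left-invariant and $gG_n=G_n$ whenever $g\in G_n$, every left translation $L_g$ (with $g\in G_m$) is a uniform automorphism of the cofinal subtower $(G_n^\LL)_{n\ge m}$, hence of $\ulim G_n^\LL$. Consequently $\tau_L$, like $\tau_G$, is left-invariant, and $\tau_L=\tau_G$ can be tested at the neutral element $e$. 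The whole proposition will then follow from the

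\textbf{Lemma.} The sets $\LP_{n\in\w}U_n=\bigcup_{k}U_0U_1\cdots U_k$, with $U_n$ ranging over symmetric neighborhoods of $e$ in $G_n$, form a neighborhood base at $e$ in $\ulim G_n^\LL$.

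Granting the Lemma, $\tau_L=\tau_G$ holds iff every $\LP U_n$ is a $\tau_G$-neighborhood of $e$ (the reverse comparison of the two neighborhood filters is automatic from $\tau_L\supseteq\tau_G$), which is exactly condition (1); this gives (1)$\Leftrightarrow$(2). For (1)$\Leftrightarrow$(3) I would transport everything by the inversion $\iota\colon g\mapsto g^{-1}$: it is a uniform isomorphism $G_n^\LL\to G_n^\RR$ carrying $\LP U_n$ to the reversed product $\bigcup_k U_k\cdots U_0$, so the Lemma yields these reversed products as a neighborhood base at $e$ in $\ulim G_n^\RR$; and since $\iota$ is a self-homeomorphism of $(G,\tau_G)$ fixing $e$, ``every reversed product is a $\tau_G$-neighborhood'' is equivalent to (1). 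Hence (1)$\Leftrightarrow$(3).

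It remains to prove the Lemma, which splits into two inclusions. The easy one is that \emph{every entourage-ball contains a product}, and this already yields (1)$\Rightarrow$(2). Given an entourage $W$ of $\ulim G_n^\LL$, I would choose symmetric entourages $W=W_0\supseteq W_1\supseteq\cdots$ with $W_{j+1}\circ W_{j+1}\circ W_{j+1}\subseteq W_j$ and, by Weil's metrization lemma, a pseudometric $d$ with $\{d<1\}\subseteq W_0$ and $W_m\subseteq\{d\le 2^{-m}\}$; as each $W_m\cap(G_n\times G_n)$ is an entourage of $G_n^\LL$, the restriction $d|_{G_n}$ is uniformly continuous on $G_n^\LL$. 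For each $n$ pick a neighborhood $U_n$ of $e$ in $G_n$ with $\{(a,b)\in G_n\times G_n:b^{-1}a\in U_n\}\subseteq W_{n+2}$. Then for $x=u_0u_1\cdots u_k$ with $u_i\in U_i$ the staircase $v_{-1}=e$, $v_j=u_0\cdots u_j\in G_j$ satisfies $v_j^{-1}v_{j-1}=u_j^{-1}\in U_j$, so $(v_{j-1},v_j)\in W_{j+2}$ and $d(v_{j-1},v_j)\le 2^{-(j+2)}$; summing, $d(e,x)\le\sum_{j\ge 0}2^{-(j+2)}<1$, whence $x\in B(e,W)$ and $\LP U_n\subseteq B(e,W)$.

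The hard inclusion — and the main obstacle — is that \emph{every product $\LP U_n$ is a neighborhood of $e$} in the coarser uniformity $\tau_L$; this is what (2)$\Rightarrow$(1) really requires. The natural device is the ascending word-norm $\nu(x)=\inf\big\{\sum_{n} f_n(a_n):x=a_0a_1\cdots a_N,\ a_n\in G_n\big\}$, where $f_n\colon G_n\to[0,1]$ is continuous with $f_n(e)=0$ and $f_n\equiv 1$ off a symmetric neighborhood $V_n\subseteq U_n$ of $e$; then $\nu(x)<1$ forces an ascending factorization with every $a_n\in V_n\subseteq U_n$, so $\{\nu<1\}\subseteq\LP U_n$. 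The difficulty is that $\nu$ need not be subadditive, since concatenating two ascending factorizations is not ascending — this is precisely where non-commutativity bites — so $\nu$ is not literally the distance to $e$ of a left-invariant pseudometric. The task is therefore to upgrade $\nu$ to a genuine entourage of $\ulim G_n^\LL$ (equivalently, to finitely many pseudometrics uniformly continuous on every $G_n^\LL$) whose ball at $e$ still lies in $\{\nu<1\}$; what makes this possible at all is that $\tau_L$ is \emph{not} a group topology, so the out-of-order products such as $u_0u_1u_0'u_1'$ that escape $\LP U_n$ need not cluster at $e$ and can be uniformly separated from it. I expect this separation — assembling the level functions $f_n$ into one direct-limit entourage while respecting the ascending order of the factors — to be the technical heart of the argument; alternatively it should be extractable from the neighborhood description of uniform direct limits established in \cite{BR1}.
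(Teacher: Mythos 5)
Your overall architecture is the paper's: everything reduces to the fact that the products $\LP_{n\in\w}U_n$ form a neighborhood base at $e$ in $\ulim G_n^{\LL}$, combined with the observation that inversion is a uniform homeomorphism $\ulim G_n^{\LL}\to\ulim G_n^{\RR}$ and a self-homeomorphism of $G$. The left-invariance reduction and your proof of the ``easy'' inclusion (every entourage-ball of $e$ contains some $\LP_{n\in\w}U_n$) are correct, and that half does give (1)$\Rightarrow$(2). The paper, for its part, simply quotes Theorem~9.1 of \cite{BR2}, which asserts that the products $\LP_{n\in\w}U_n$ of open sets $U_n\subset G_n$ form a base of the topology of $\ulim G_n^{\LL}$, and derives both implications from that at once.

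The genuine gap is exactly where you locate it: you never establish that each $\LP_{n\in\w}U_n$ is a neighborhood of $e$ in $\ulim G_n^{\LL}$, which is what (2)$\Rightarrow$(1) needs, and the device you sketch for it (the ascending word-norm $\nu$) is, as you yourself note, not subadditive, and you do not repair it. As written, the proof of (2)$\Rightarrow$(1) is therefore missing. Your fallback remark is, however, the right move and is essentially what the paper does: the general description of the topology of a uniform direct limit (from \cite{BR1}, and in the group setting from \cite{BR2}) says that the iterated balls $\bigcup_{n}B(\cdots B(B(x,V_0),V_1)\cdots,V_n)$, with $V_n$ an entourage of $X_n$, are basic neighborhoods of $x$ in $\ulim X_n$; since for the left uniformity $B(A,U^{\LL})=AU$, this iterated ball at $x=e$ is literally $\bigcup_n U_0U_1\cdots U_n=\LP_{n\in\w}U_n$. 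So the non-commutativity difficulty you anticipate with the word-norm is an artifact of that particular device rather than of the statement; but to make your argument self-contained you would still have to prove (or explicitly cite) the iterated-ball lemma, and the proposal does neither.
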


It should be mentioned that for a tower of metrizable topological groups $(G_n)_{n\in\w}$ the identity map $\tlim G_n\to\ulim G_n$ is a homeomorphism if and only if all groups $G_n$ are locally compact or there is a number $m\in\w$ such that for every $n\ge m$ the group $G_n$ is open in $G_{n+1}$, see \cite{BRTohoku}, \cite{BaZ}, \cite{Yama} or \cite[7.1]{Glock}.

If $(X_n)_{n\in\w}$ is a tower of locally convex linear topological spaces, then besides the topology of the group direct limit $\glim X_n$, the union $X=\bigcup_{n\in\w}X_n$ carries also the topology of the direct limit
in the category of (locally convex) linear topological spaces. The corresponding direct limit space is denoted by $\lclim X_n$ (resp. $\llim X_n$). This is the union $X=\bigcup_{n\in\w}X_n$ endowed with the strongest topology that turns $X$ into a (locally convex) linear topological space and makes the identity maps $X_n\to X$, $n\in\w$, continuous.

The following proposition proved in \cite{BR10} implies that many direct limit topologies on $X$ coincide.

\begin{proposition} For any tower $(X_n)_{n\in\w}$ of locally convex linear topological spaces the identity maps
$$\ulim X_n\to \glim X_n\to\llim X_n\to\lclim X_n$$
are homeomorphisms.
\end{proposition}

In particular, each LF-space has the topology of the uniform direct limit of a tower of Fr\'echet spaces.

\section{(Locally) topologically complemented subgroups in topological groups}

In this section we study (locally) topologically complemented subgroups of topological groups. Let us recall that a closed subgroup $H$ of a topological group is ({\em locally}) {\em topologically complemented} if the quotient map $q:G\to G/H$ is a (locally) trivial bundle.
Here  $G/H=\{xH:x\in G\}$ is the quotient space of left cosets of $H$ in $G$. It is a pointed topological space with a distinguished point $*_{G/H}=H$.
For the theory of bundles, we refer the reader to \cite{Hus}.

\begin{lemma}\label{l:cmp} If $H$ is a (locally) topologically complemented subgroup of a topological group $G$, then $H$ is (locally) $G/H$-complemented in the uniform space $G^\RR$.
\end{lemma}

\begin{proof} Since $H$ is locally topologically complemented in $G$, the quotient map $q:G\to G/H$ has a continuous section $s:U\to G$ defined in an open neighborhood $U\subset G/H$ of the distinguished point $*_{G/H}$.
If $H$ is topologically complemented in $G$, then we can take $U$ to be equal to $G/H$. Replacing $s(x)$ by $s(x)s(*_{G/H})^{-1}$, we can additionally assume that $s(*_{G/H})$ coincides with the neutral element $*_G$ of the group $G$. It follows from the definition of the uniformity $\U^\RR$ that the preimage $q^{-1}(U)$ is an open uniform neighborhood of $H$ in the uniform space $G^\RR$. Now we see that the homeomorphism
$$\gamma:H\times U\to q^{-1}(U),\;\;\gamma(h,y)=s(y)\cdot h$$witnesses that $H$ is $U$-complemented in the uniform neighborhood $q^{-1}(U)$ of $H$ in $G^\RR$ and hence $H$ is locally $G/H$-complemented in $G^\RR$.

If $U=G/H$, then the homeomorphism $\gamma$ witnesses that $H$ is $G/H$-complemented in $G^\RR$.
\end{proof}

In some cases the local topological complementability implies the topological complementability.

\begin{proposition}\label{p:ltc-tc} A locally topologically complemented subgroup $H$ of a topological ANR-group $G$ is topologically complemented if the quotient space $G/H$ is contractible or both spaces $G$ and $H$ are contractible.
\end{proposition}

\begin{proof} First we show that the quotient space $G/H$ is a (metrizable) ANR. Being metrizable, the group $G$ admits a right invariant metric $d$
generating the topology of $G$.
Then the topology of the quotient space $G/H$ is generated by
the metric $\rho$ defined by
$$\rho(xH,yH)=\inf\{d(a,b):a\in xH,\; b\in yH\},\;\; xH,yH\in G/H.$$
Hence $G/H$, being metrizable, is paracompact.

Since $H$ is locally topologically complemented in $G$, the quotient map $q:G\to G/H$ is a locally trivial bundle with fiber $H$. This implies that the space $H\times (G/H)$ is locally homeomorphic to $G$. Since $G$ is an ANR, each point of the quotient space $G/H$ has an ANR-neighborhood,
which implies that $G/H$ is an ANR, see Hanner's Theorem 5.1 in \cite[Ch.II]{BP}.

If the quotient space $G/H$ is contractible, then the locally trivial
bundle $q:G\to G/H$ is trivial according to  Corollary 4.10.3 of \cite{Hus}.

Now assume that the spaces $G$ and $H$ are contractible. We claim that the quotient space $G/H$ contractible.
Since $G$ is path connected, so is $G/H$.
Since both the total space $G$ and the fiber $H$ of the bundle $q:G\to G/H$ are contractible,
the exact sequence of the fibration $\pi : G \to G/H$ implies
that all homotopy groups $\pi_i(G/H) = 0$, $i\in\IN$, of $G/H$ are trivial.
Therefore, by Whitehead Theorem (see II.6.1 in \cite{BP}),
the ANR-space $G/H$ is contractible.
\end{proof}

\section{Proof of Theorem~\ref{t:gbox}}\label{s:gbox}

In this section we shall prove Theorem~\ref{t:gbox}. Assume that a topological group $G$ carries the strong topology with respect to a (locally) topologically complemented tower $(G_n)_{n\in\w}$ of subgroups.

By Proposition~\ref{u=g}, the topology of $G$ coincides with the topology of the uniform direct limit $\ulim G_n^\RR$ of the tower $(G_n^\RR)_{n\in\w}$ of the groups $G_n$ endowed with their right uniformities. By Lemma~\ref{l:cmp}, each set $G_n$ is (locally) $G_{n+1}/G_n$-complemented in $G_{n+1}^\RR$. Taking into account that the space $G$ is topologically homogeneous and applying Lemma~\ref{l:udlbp}, we conclude that $G=\ulim G_n^\RR$ is (locally) homeomorphic to the small box-product $G_0\times\cbox_{n\in\w}G_{n+1}/G_n$.

\section{The $Z$-point property in quotient spaces of topological groups}

In this section we shall study the $Z$-point property in quotient spaces of topological groups. Let us recall that a topological space $X$ has {\em the $Z$-point property} if each $Z$-point in $X$ is a strong $Z$-point.
In fact, it is more convenient to work not with (strong) $Z$-point but with an equivalent notion of a (strong) $Z_\infty$-point.

Let $\kappa$ be a cardinal. A closed subset $A$ of a topological space $X$ is called a {\em $(\kappa{\times}Z_\infty)$-set} if for each open cover $\U$ of $X$ any map $f:\kappa\times\II^\w\to X$ can be approximated by a map $\tilde f:\kappa\times \II^\w\to X$ such that $\tilde f$ is $\U$-near to $f$ and $A$ does not intersect the closure of the set $\tilde f(\kappa\times \II^\w)$ in $X$.

We shall refer to $(1\times Z_\infty)$-sets and $(\w\times Z_\infty)$-sets as {\em $Z_\infty$-sets} and {\em strong $Z_\infty$-sets}, respectively. Such sets were studied in \cite{Tor78}, \cite[\S 2.2]{Chi} and \cite[\S1.4]{BRZ}. A point $x$ of a topological space $X$ will be called a ({\em strong}) {\em $Z_\infty$-point} if the singleton $\{x\}$ is a (strong) $Z_\infty$-set in $X$.

The following characterization of (strong) $Z$-sets in (separable) ANR's is well-known, see \cite{Tor78} and \cite[2.2.3 and 2.2.6]{Chi}.

\begin{lemma}\label{l:ZsZ} A point $x$ of a (separable) ANR-space $X$ is a (strong) $Z$-point if and only if it is a (strong) $Z_\infty$-point.
\end{lemma}

\begin{lemma}\label{l:Zinfty} Let $H$ be a locally topologically complemented subgroup in a topological group $G$. If the quotient space $G/H$ is not locally compact, then each compact subset $K$ of $G/H$ is a $Z_\infty$-set in $G/H$.
\end{lemma}

\begin{proof} Assume that the quotient space $G/H$ is not locally compact and take any compact subset $K\subset G/H$. To prove that $K$ is a $Z_\infty$-set in $G/H$, fix an open cover $\U$ of $G/H$ and a continuous map $f:\II^\w\to G/H$ from  the Hilbert cube.

By the local complementability of the subgroup $H$ in $G$, the quotient map $q:G\to G/H$ is a locally trivial bundle. Using this fact and the contractibility of the Hilbert cube $\II^\w$, we can find a continuous map $g:\II^\w\to G$ such that $q\circ g=f$. By compactness of $g(\II^\w)$, there is a neighborhood $U\subset G$ of the neutral element $*_G$ of $G$ so small that for every $u\in U$ the map $f_u:\II^\w\to G/H$ defined by $f_u(x)=q(g(x)u)$ for $x\in\II^\w$ is $\U$-near to $f$. Since the quotient map $q:G\to G/H$ is open, the set $q(U)$ is an open neighborhood of $*_{G/H}$.

By the local triviality of $q$, there is a compact subset $\tilde K\subset G$ such that $q(\tilde K)=K$. Consider the compact subset $C=g(\II^\w)$ of $G$ and the compact subset $q(C^{-1}\tilde K)\subset G/H$, which does not contain the neighborhood $q(U)$ of $*_{G/H}$ as $G/H$ is not locally compact at $*_{G/H}$. Consequently, there is an element $u\in U$ with $q(u)\notin q(C^{-1}\tilde K)$, which implies that $u\notin C^{-1}\tilde KH$
and hence $Cu\cap \tilde KH=\emptyset$. Then the map $f_u:\II^\w\to G/H$ has the required property: it is $\U$-near to $f$ and $f_u(\II^\w)\cap K=q(g(\II^\w)\cdot u)\cap q(\tilde KH)=\emptyset$.
\end{proof}

\begin{proposition}\label{p:lz} Let $H$ be a locally topologically complemented subgroup of an ANR-group $G$. The quotient space $G/H$ is a $\lz$-pointed space if and only if it is not discrete and has the $Z$-point property.
\end{proposition}

\begin{proof} To prove the ``only if'' part, assume that the quotient space $G/H$ is an $\lz$-space. Then the distinguished point $*_{G/H}$ of $G/H$ is not isolated and hence $G/H$ is not discrete. If $G/H$ is locally compact, then by \cite[2.2.4]{Chi}, each $Z$-set in $G/H$ is a strong $Z$-set and hence $G/H$ has the $Z$-point property. If $G/H$ is not locally compact, then $*_{G/H}$ is a strong $Z$-point in $G/H$ and by the topological homogeneity of $G/H$, each point of $G/H$ is a strong $Z$-point. Then the space $G/H$ trivially has the $Z$-point property.

To prove the ``if'' part, assume that the quotient space $G/H$ is not discrete and has the $Z$-point property. We should prove that the distinguished point $*_{G/H}$ of $G/H$ is not isolated and either $G/H$ is locally compact or $*_{G/H}$ is a strong $Z$-point in $G/H$.

The space $G/H$ is not discrete and hence contains a non-isolated point. Then by the topological homogeneity of $G/H$, no point of $G/H$ is isolated. If $G/H$ is not locally compact, then $*_{G/H}$ is a $Z_\infty$-point according to Lemma~\ref{l:Zinfty}.
Since $G/H$ is an ANR as shown in the proof of Proposition~\ref{p:ltc-tc},
by Lemma~\ref{l:ZsZ}, $*_{G/H}$ is a $Z$-point in $G/H$ and by the $Z$-point property, it is a strong $Z$-point in $G/H$.
\end{proof}

Finally, we consider the problem of detecting quotient spaces $G/H$ all whose points are  strong $Z$-points. Let us recall that an action of a topological group $G$ on a topological space $X$ is called {\em locally bounded} at a point $x_0\in X$ if there is a neighborhood $U\subset G$ of the neutral element $*_G$ of $G$ such that for every neighborhood $V\subset X$ of $x_0$ there is a compact subset $K\subset X$ that meets each shift $xV$, $x\in U$. In the opposite case the action is called {\em locally unbounded}.

\begin{proposition}\label{p:Zlb} Let $H$ be a closed subgroup of a locally path-connected group $G$. If the action of $G$ on the quotient space $G/H$ is locally unbounded, then $*_{G/H}$ is a strong $Z_\infty$-point in $G/H$.
If $G/H$ is a separable ANR-space, then $*_{G/H}$ is a strong $Z$-point in $G/H$.
\end{proposition}

\begin{proof} Fix an open cover $\U$ of $G/H$ and a continuous map $f:\w\times\II^\w\to G/H$. Find a set $U_0\in\U$ that contains the distinguished point $*_{G/H}$. The continuity of the action of $G$ on $G/H$ yields a neighborhood $V_1\subset G$ of $*_G$ and a neighborhood $U_1\subset G/H$ of $*_{G/H}$ such that $V_1\cdot U_1\subset U_0$. Since the quotient space $G/H$ is completely regular, there is a continuous function $\lambda:G/H\to [0,1]$ such that $G/H\setminus U_1\subset \lambda^{-1}(0)$ and $\lambda^{-1}(1)$ is a neighborhood of $*_{G/H}$. By continuity of the action of $G$, there are neighborhoods $V_2\subset G$ and $U_2\subset G/H$ of $*_G$ and $*_{G/H}$ such that $V_2^{-1}\subset V_1$ and $V_2\cdot U_2\subset \lambda^{-1}(1)$. Since $G$ is locally path-connected, there is a neighborhood $V_3\subset G$ of $*_G$ such that each point $x\in V_3$ can be connected with $*_G$ by a continuous path lying in $V_2$.

Since the action of $G$ in $G/H$ is locally unbounded, for the neighborhood $V_3$ there is a neighborhood $U_3\subset U_2$ of $*_{G/H}$ such that for any compact subset $K\subset G/H$ there is a point $x\in V_3$ such that the shift $xU_3$ does not intersect $K$.

Now we shall construct a map $\tilde f:\w\times \II^\w\to G/H$ such that
$\tilde f$ is $\U$-near to $f$ and $\tilde f(\w\times\II^\w)\cap U_3=\emptyset$.
It suffices for every $n\in\w$ to approximate the map $f_n:\II^\w\to G/H$, $f_n:t\mapsto f(n,t)$, by a map $\tilde f_n:\II^\w\to G/H$ such that $\tilde f_n$ is $\U$-near to $f_n$ and $\tilde f_n(\II^\w)\cap U_3=\emptyset$.

For every $n\in\w$ consider the compact subset $K_n=f(\{n\}\times\II^\w)$ of $G/H$. By the choice of $U_3$ there is a point $x_n\in V_3$ such that $x_nU_3\cap K_n=\emptyset$. Then $x_n^{-1}K_n\cap U_3=\emptyset$. By the choice of the neighborhood $V_3$ there is a continuous path $\gamma_n:[0,1]\to V_2$ such that $\gamma_n(0)=*_G$ and $\gamma_n(1)=x_n$. Define the map $\tilde f_n:\II^\w\to G/H$ by the formula $$\tilde f_n(t)=\gamma_n(\lambda(f_n(t)))^{-1}f_n(t)$$ for $t\in\II^\w$.

\begin{claim}\label{cl1} The map $\tilde f_n$ is $\U$-near to $f_n$.
\end{claim}

\begin{proof} Fix any $t\in\II^\w$. If $\lambda(f_n(t))=0$, then $\tilde f_n(t)=*_G\cdot f_n(t)$ and hence $\{\tilde f_n(t),f_n(t)\}$ is a singleton lying in some element of the cover $\U$.

If $\lambda(f_n(t))>0$, then $f_n(t) \in U_1$ and $\tilde f_n(t)=\gamma_n(\lambda(f_n(t)))^{-1}\cdot f_n(t)\subset V_2^{-1}\cdot U_1\subset V_1\cdot U_1\subset U_0$. In this case $\{f_n(t),\tilde f_n(t)\}\subset U_0\in\U$.
\end{proof}

\begin{claim}\label{cl2} $\tilde f_n(\II^\w)\cap U_3=\emptyset$.
\end{claim}

\begin{proof} Given any point $t\in\II^\w$, we must prove that $\tilde f_n(t)\notin U_3$.

If $\lambda(f_n(t))<1$, then $f_n(t)\notin V_2\cdot U_2$. We claim that $\tilde f_n(t)\notin U_2$. Otherwise $$
f_n(t)=\gamma_n(\lambda(f_n(t)))\tilde f_n(t)\in V_2 \cdot U_2.$$
Then $\tilde f_n(t)\in (G/H)\setminus U_2\subset (G/H)\setminus U_3$.

If $\lambda(f_n(t))=1$, then
$$\tilde f_n(t)=\gamma_n(\lambda(f_n(t)))^{-1}f_n(t)=x_n^{-1}f_n(t)\in x_n^{-1}K_n\subset (G/H)\setminus U_3$$by the choice of the point $x_n$.
\end{proof}

Claims~\ref{cl1} and \ref{cl2} complete the proof that $*_{G/H}$ is a strong $Z_\infty$-point. If $G/H$ is a separable ANR, then $*_{G/H}$ is a strong $Z$-point, according to Lemma~\ref{l:ZsZ}.
\end{proof}

\section{Topological groups, (locally) homeomorphic to small box-products of typical model spaces}

In fact, Theorem~\ref{t:gLF} holds in a more general context of small box-products of typical model spaces.
Typical model spaces were introduced in \cite{BR-LF} so that manifolds modeled on such spaces have many common properties with Hilbert manifolds.

\begin{definition}\label{d:tms} A pointed topological space $E$ is called a {\em typical model space} if
\begin{enumerate}
\item $E$ is a topologically homogeneous absolute retract containing a topological copy of the Hilbert cube $Q=[0,1]^\w$;
\item For any neighborhood $U\subset E$ of $*_E$ there is a neighborhoods $V,W\subset U$ of $*_E$ such that $W$ and $E\setminus V$ are homeomorphic to $E$ and the boundary $\partial V$ of $V$ is a retract of $\overline{V}$ and a $Z$-set in $E\setminus V$;
\item each contractible $E$-manifold is homeomorphic to $E$;
\item each connected $E$-manifold $M$ is homeomorphic to an open subset of $E$;
\item any homeomorphism $h:A\to B$ between $Z$-sets $A,B\subset E$ extends to a homeomorphism $\bar h:E\to E$ of $E$;
\item for any $E$-manifold $M$ the projection $E\times M\to M$ is a near homeomorphism;
\item for any retract $X$ of an open subset of $E$ the product $X\times E$ is homeomorphic to an open subset of $E$;
\item for any retract $X$ of an $E$-manifold and a strong $Z$-point $*_X\in X$ the reduced product $X\rtimes E$ is an $E$-manifold, homeomorphic to   $X\times E$.
\end{enumerate}
\end{definition}

\begin{remark} A typical example of a completely metrizable typical model space is any infinite-dimensional Hilbert space, see \cite[4.2]{BR-LF}. Many incomplete typical model spaces can be found among absorbing and coabsorbing spaces, see \cite{BM} and \cite{BRZ}.
\end{remark}

The following criterion was proved in \cite[6.1]{BR-LF}.

\begin{theorem}\label{t:tsbox} Let $(X_n)_{n\in\w}$ be a sequence of pointed topological spaces such that for every $n\in\w$ the finite product $\prod_{i\le n}X_n$ is homeomorphic to (an open subspace of) some typical model space $E_n$. Assume that for infinitely many numbers $n\in\w$ the pointed space $X_n$ is $\lz$-pointed. Then the small box-product $\cbox_{n\in\w}X_n$ is homeomorphic to (an open subset of) the small box-product $\cbox_{n\in\w}E_n$.
\end{theorem}

Theorems~\ref{t:gbox}, \ref{t:tsbox} and Propositions~\ref{p:lz} and
\ref{p:ltc-tc} imply the following ``typical'' version of Theorem~\ref{t:gLF}.

\begin{theorem} Let $(E_n)_{n\in\w}$ be a sequence of typical model spaces. A topological group $G$ carrying the strong topology with respect to a tower of subgroups $(G_n)_{n\in\w}$ is
\begin{enumerate}
\item homeomorphic to (an open subset of) $\cbox_{n\in\w}E_n$ if for every $n\in\w$ the group $G_n$ is homeomorphic (to an open subset of) $E_n$, $G_n$ is topologically complemented in $G_{n+1}$ and for infinitely many numbers $n\in\w$ the quotient space $G_{n+1}/G_n$ is not discrete and has the $Z$-point property;
\item (locally) homeomorphic to $\cbox_{n\in\w}E_n$ if for every $n\in\IN$ the  group $G_n$ is (locally) homeomorphic to $E_n$ and is locally topologically complemented in $G_{n+1}$ and for infinitely many numbers $n\in\w$ the quotient space $G_{n+1}/G_n$ is not discrete and has the $Z$-point property.
\end{enumerate}
\end{theorem}

\end{document}